\newcommand{\SE}{{\mathcal{E}}}
\newcommand{\SW}{{\mathcal{W}}}
\newcommand{\SX}{{\mathfrak{X}}}
\newcommand{\SF}{{\mathcal{F}}}
\newcommand{\ot}{{\operatorname{OT}}}
\newcommand{\leg}{{\operatorname{leg}}}
\newcommand{\std}{{\operatorname{std}}}
\newcommand{\Lutz}{{\operatorname{Lutz}}}
\newcommand{\SL}{{\mathcal{L}}}
\newcommand{\R}{{\mathbb{R}}}
\newcommand{\NS}{{\mathbb{S}}}
\newcommand{\D}{{\mathbb{D}}}
\newcommand{\Op}{{\mathcal{O}p}}
\newtheorem{lemma}{Lemma}
\newtheorem{theorem}[lemma]{Theorem}
\newtheorem{definition}[lemma]{Definition}
\theoremstyle{remark}
\newtheorem{remark}[lemma]{Remark}
\begin{document} 

\title{Some tight contact foliations can be approximated by overtwisted ones}

\subjclass[2010]{Primary: 57R30, 57R17.}
\date{\today}

\keywords{contact foliation, even--contact structure, turbulisation, tightness}

\author{\'Alvaro del Pino}
\address{Utrecht University, Department of Mathematics, Budapestlaan 6, 3584 Utrecht, The Netherlands}
\email{a.delpinogomez@uu.nl}

\begin{abstract}
A contact foliation is a foliation endowed with a leafwise contact structure. In this remark we explain a turbulisation procedure that allows us to prove that tightness is not a homotopy invariant property for contact foliations.
\end{abstract}

\maketitle

\section{Statement of the results}

Let $M^{2n+1+q}$ be a closed smooth manifold. Let $\SF^{2n+1}$ be a smooth codimension--$q$ foliation on $M$. We say that $(M,\SF)$ can be endowed with the structure of a \emph{contact foliation} if there is a hyperplane field $\xi^{2n} \subset \SF$ such that, for every leaf $\SL$ of $\SF$, $(\SL,\xi|_{\SL})$ is a contact manifold.

In \cite[Theorem 1.1]{CPP} it was shown that $(M^4,\SF^3)$ admits a leafwise contact structure if there exists a $2$--plane field tangent to $\SF$. This was later extended in \cite{BEM} to foliations of any dimension, any codimension, and admitting a leafwise formal contact structure. In both cases, the foliations produced have all leaves overtwisted; therefore, the meaningful question is whether one can construct and classify contact foliations with tight leaves.

\textbf{Instability of tightness under homotopies.} If the foliation $\SF$ is fixed, the parametric Moser trick \cite[Lemma 2.8]{CPP} implies that any two homotopic contact foliations $(\SF,\xi_0)$ and $(\SF,\xi_1)$ are actually isotopic by a flow tangent to the leaves. In particular, if $\SF$ is fixed, tightness is preserved under homotopies. Our main result states that this is not the case anymore if $\SF$ is allowed to move:
\begin{theorem} \label{thm:main1}
Let $N$ be a closed orientable $3$--manifold. There is a path of contact foliations $(N \times \NS^1,\SF_s,\xi_s)$, $s\in [0,1]$, satisfying:
\begin{itemize}
\item the leaves of $(N \times \NS^1,\SF_0,\xi_0)$ are tight,
\item the leaves of $(N \times \NS^1,\SF_s,\xi_s)$ are overtwisted, for all $s>0$.
\end{itemize}
\end{theorem}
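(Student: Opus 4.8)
The plan is to first build a contact foliation on $N\times\NS^1$ whose leaves are tight, and then to deform it by a leafwise Lutz twist concentrated in a tube that shrinks onto a transverse surface, using a turbulisation of the ambient foliation to keep the intermediate objects leafwise contact.

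\textbf{Step 1: a tight model.} If $N$ admits a tight contact structure $\eta$, let $\SF_0$ be the foliation of $N\times\NS^1$ by the slices $N\times\{t\}$ and let $\xi_0$ be $\eta$ on each slice; the leaves are then tight. For a general $N$ one builds $\SF_0$ from an open book $N=\Sigma_\phi\cup(B\times\D^2)$ with connected binding: over the mapping torus, foliate $\Sigma_\phi\times\NS^1$ by the fibres of the projection to the binding circle (leaves $\cong\Sigma\times\NS^1$), and near $B\times\D^2\times\NS^1$ turbulise, spiralling the pages onto a standard model with leaves among $T^3$, $\NS^1\times\R^2$ and, when $N$ is reducible, $\NS^2\times\R$. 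A foliated Thurston--Winkelnkemper construction then produces a leafwise contact structure $\xi_0\subset\SF_0$; on the leaves $\Sigma\times\NS^1$ one can use the leafwise form $dt+\lambda$ with $d\lambda$ a leafwise area form, which is a $C^0$--perturbation of the taut foliation by pages and hence tight by Eliashberg--Thurston, while $T^3$, $\NS^1\times\R^2$ and $\NS^2\times\R$ all carry tight structures. Either way $(\SF_0,\xi_0)$ has tight leaves.

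\textbf{Step 2: turbulising a Lutz twist.} Pick a surface $S\subset N\times\NS^1$ meeting every leaf $\SL$ of $\SF_0$ in a knot transverse to $\xi_0|_\SL$ --- in the product model $S=K\times\NS^1$ for a transverse knot $K\subset(N,\eta)$. Fix a leafwise standard neighbourhood of $S$, of the form $\NS^1_\psi\times\D^2_{(r,\theta)}$ in each leaf with $\xi_0|_\SL=\ker(d\psi+r^2\,d\theta)$ there, depending smoothly on the leaf. For $s\in(0,1]$ modify $(\SF_0,\xi_0)$ only inside the sub-tube $\{r\le s\}$: replace the leafwise contact form by $h^s_1(r)\,d\psi+h^s_2(r)\,d\theta$, where the path $r\mapsto(h^s_1(r),h^s_2(r))$ turns once about the origin as $r$ runs over $[0,s]$, is parallel to $(1,r^2)$ near $r=s$, and has nowhere-vanishing angular speed --- a full Lutz twist compressed into the tube of radius $s$ --- and at the same time turbulise the foliation there, tilting its leaves off $\SF_0$ and bringing them back to $\SF_0$ at $r=0$ and near $r=s$. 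Choosing these two deformations together so that $(\SF_s,\xi_s)$ is a contact foliation for every $s$ and depends continuously on $s$ with $(\SF_s,\xi_s)\to(\SF_0,\xi_0)$ as $s\to0$, one obtains the desired path. Since a full Lutz twist contains an overtwisted disc inside the Lutz tube around $S\cap\SL$ in each leaf $\SL$, every leaf of $(\SF_s,\xi_s)$ is overtwisted for all $s>0$.

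\textbf{Main obstacle.} The crux is the joint construction in Step 2: one must exhibit the family of spinning paths $(h^s_1,h^s_2)$ together with the turbulised foliations $\SF_s$ so that each $(\SF_s,\xi_s)$ is genuinely a contact foliation, so that $\SF_s$ is a smooth foliation equal to the prescribed turbulised model inside the tube, and so that the family is continuous with $(\SF_0,\xi_0)$ recovered at $s=0$. It is the freedom to move $\SF$ that makes the passage from the tight $\xi_0$ to the overtwisted $\xi_s$ possible, and there is no conflict with the parametric Moser trick from the introduction, which only rules out such a passage along a \emph{smooth} path of leafwise contact structures on a \emph{fixed} foliation: here $\SF$ genuinely moves and the twist degenerates non-smoothly as $s\to0$. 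Granting Step 2, the theorem is immediate from Steps 1 and 2.
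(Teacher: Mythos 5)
Your proposal runs in the opposite direction from the paper's proof, and that reversal is fatal rather than cosmetic.

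\textbf{The essential gap.} You propose to start with a tight contact foliation $(\SF_0,\xi_0)$ and, for $s>0$, insert a full Lutz twist compressed into the tube $\{r\le s\}$ around a transverse knot, with a simultaneous turbulisation of the foliation, arranging $(\SF_s,\xi_s)\to(\SF_0,\xi_0)$ as $s\to 0$. This limit cannot hold, because a full Lutz twist is not a small modification in the neighbourhood where it lives. Concretely: writing the twisted form as $h_1^s(r)\,d\psi+h_2^s(r)\,d\theta$, the contact plane at radius $r$ is spanned by $\partial_r$ and $h_2^s(r)\partial_\psi-h_1^s(r)\partial_\theta$. As $(h_1^s,h_2^s)$ winds once around the origin over $[0,s]$, this plane rotates by $2\pi$ about $\partial_r$. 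At the radius where the winding angle equals $\pi/2$, the plane is spanned by $\partial_r$ and $\partial_\psi$, whereas $\xi_0$ there is essentially $\langle\partial_r,\partial_\theta\rangle$; the two planes differ by a fixed angle, independent of $s$. Hence $\sup_p\,\mathrm{dist}(\xi_s(p),\xi_0(p))$ is bounded below by a positive constant for all $s>0$, and $\xi_s$ does not converge to $\xi_0$ even in $C^0$. Moving $\SF$ in tandem does not remedy this: the obstruction is the rotation of the $2$--plane field $\xi_s\subset TM$ inside the tube, not its relation to the leaves. So the family you describe is not a path of contact foliations, and you have not evaded the Moser obstruction from the introduction --- you have simply proposed a discontinuous family, which is vacuous.

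\textbf{What the paper does instead.} The paper's path goes from overtwisted to tight. One starts with $(N,\xi)$ \emph{overtwisted}, so that $(\SF_1,\xi_1)$ already has overtwisted leaves, picks a non--loose legendrian link $K\subset(N,\xi)$ (Dymara; existence via Etnyre--Vela-Vick: the legendrian push--off of the binding of a supporting open book), and turbulises along the torus $K\times\NS^1$. For $s>0$ the turbulised foliation $\SF_s$ is isotopic to $\SF_1$, so by Gray stability the leaves are unchanged up to contactomorphism and stay overtwisted. At $s=0$ a Reeb component appears; its compact $T^3$ leaf and interior $\R^2\times\NS^1$ leaves are tight by Lemma~\ref{lem:tight}, and the leaves outside are the complement of a neighbourhood of the non--loose link, which is tight precisely because the link was non--loose. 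The crucial structural point making the path continuous is that all the $\xi_s$ are cut out as $\SE\cap\SF_s$ by a single even--contact structure $\SE$ transverse to the whole family $\SF_s$, so continuity of the foliations gives continuity of the leafwise contact structures for free. The overtwisted disc is not ``created'' as $s$ increases; it is present in $M$ throughout, and the path of foliations merely moves a Reeb wall so that for $s>0$ the disc lies inside a leaf while at $s=0$ it is cut off into a tight Reeb component. This mechanism is exactly what your proposal lacks. (Your Step 1 is also underjustified --- not every closed orientable $3$--manifold carries a tight contact structure, e.g.\ the reversed Poincar\'e sphere, so the easy case does not suffice and the open--book fallback needs a real argument --- but that is moot next to the Step 2 issue.)
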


\textbf{Foliations transverse to even--contact structures.} Given a codimension--$1$ contact foliation $(M^{2n+2},\SF^{2n+1},\xi^{2n})$ and a line field $\SX$ transverse to $\SF$, it is immediate that the codimension--$1$ distribution $\SE = \xi \oplus \SX$ is maximally non--integrable. Such distributions are called \emph{even--contact structures}.

The \emph{kernel} or \emph{characteristic foliation} of $\SE$ is a line field $\SW \subset \SE$ uniquely defined by the expression $[\SW,\SE] \subset \SE$. Given an even--contact structure $\SE$, any codimension--1 foliation transverse to its kernel is imprinted with a leafwise contact structure. It is natural to study the moduli of contact foliations arising in this manner from $\SE$. Our second result states:
\begin{theorem} \label{thm:main2}
Let $N$ be a closed orientable $3$--manifold. There are foliations $\SF_0$ and $\SF_1$ and an even--contact structure $\SE$ such that:
\begin{itemize}
\item the leaves of $(N \times \NS^1,\SF_0,\xi_0 = \SE \cap \SF_0)$ are tight,
\item the leaves of $(N \times \NS^1,\SF_1,\xi_1 = \SE \cap \SF_1)$ are overtwisted.
\end{itemize}
\end{theorem}
\begin{proof}
During the proof of Theorem \ref{thm:main1} we shall see that the contact foliations $(\SF_s,\xi_s)$, $s\in[0,1]$, are imprinted by the same even--contact structure $\SE$.
\end{proof}
This result is in line with the theorem of McDuff \cite{McD} stating that even--contact structures satisfy the complete $h$--principle: one should expect this flexibility to manifest in other ways.

\textbf{Acknowledgements.} This note developed during a visit of the author to V. Ginzburg in UCSC and it was V. Ginzburg that posed the question of whether tightness could potentially be stable under deformations. In this occasion the question is certainly more clever than the small observation that provides the (negative) answer. The author is also grateful to F. Presas for reading this note and providing valuable suggestions. The author is supported by the grant NWO Vici Grant no. 639.033.312.

\section{Turbulisation of contact foliations}

We will now explain how to turbulise a contact foliation along a loop of legendrian knots. 

\subsection{Local model around a loop of legendrian knots}

Let $(N,\xi)$ be a contact $3$--manifold. Any legendrian knot $K \subset (N,\xi)$ has a tubular neighbourhood with the following normal form:
\[ (\Op(K) \subset N, \xi) \cong (\D^2 \times \NS^1, \xi_\leg = \ker(\cos(z)dx+\sin(z)dy)), \]
where $(x,y,z)$ are the coordinates in $\D^2 \times \NS^1$. A convenient way of thinking about the model is that it is simply the space of oriented contact elements of the disc. In particular, any diffeomorphism $\phi$ of $\D^2$ relative to the boundary induces a contactomorphism $C(\phi)$ of the model, also relative to the boundary, as follows:
\[ C(\phi)(x,y,z) = (\phi(x,y),d\phi(z)). \]
Here we think of $z$ as an oriented line in $T_{(x,y)}\D^2$ and we make $d\phi$ act by pushforward.

We can now define a contact foliation 
\[ (M_\leg = \D^2 \times \NS^1 \times \NS^1, \SF_\leg = \coprod_{t} \D^2 \times \NS^1 \times \{t\},  \xi_\leg), \]
which is simply the trivial bundle over $\NS^1$ with fibre the local model we just described. Given a contact foliation $(M,\SF,\xi)$ and an embedded torus $K: \NS^1\times\NS^1 \to M$ such that $K_t = K(t,-)$ is a legendrian knot on a leaf of $\SF$, it follows that there is an embedding $(M_\leg, \SF_\leg,  \xi_\leg) \to (M,\SF,\xi)$ providing a local model around $K$. It is sufficient to describe the turbulisation process in $(M_\leg, \SF_\leg, \xi_\leg)$.

\subsection{Fixing the even--contact structure}

Our aim now is to fix an even--contact structure $\SE_\leg$ in $(M_\leg, \SF_\leg)$ imprinting $\xi_\leg$. The reason why we do not simply choose $\xi_\leg \oplus \langle \partial_t\rangle$ is that $\SE_\leg$ should allow us to turbulise.

Take polar coordinates $(r,\theta)$ on $\D^2$. Construct a diffeomorphism $\phi: \D^2 \to \D^2$ such that:
\begin{itemize}
\item $\phi$ is of the form $\phi(r,\theta) = (f(r),\theta)$ for some function $f: [0,1] \to [0,1]$,
\item $f$ restricts to the identity in the complement of $[1/2,2/3]$,
\item $f$ compresses the interval $[1/2,2/3]$ towards the point $1/2$. 
\end{itemize}
See Figure \ref{fig:graph} for a depiction of the graph of $f$. Fix a vector field $h(r)\partial_r$, with $h(r) < 0$ in the region $r\in(1/2,2/3)$ and $h(r) = 0$ everywhere else, whose time--$1$ map is the function $f(r)$. There exists a unique vector field $X$ in $\D^2 \times \NS^1$ satisfying:
\begin{itemize}
\item $X$ is a contact vector field for the structure $\xi_\leg$,
\item $X$ is a lift of $h(r)\partial_r$. In particular, $X$ has a negative radial component in the region $r\in(1/2,2/3)$.
\end{itemize}
By construction the $3$--distribution $\SE_\leg(x,y,z,t) = \xi_\leg \oplus \langle \partial_t + X(x,y,z) \rangle$ is an even--contact structure whose kernel is $\SW_\leg = \langle \partial_t + X(x,y,z) \rangle$ and whose imprint on $(M_\leg, \SF_\leg)$ is precisely $\xi_\leg$.

\begin{center}
\begin{figure}[h!]
\centering
  \includegraphics[scale=0.4]{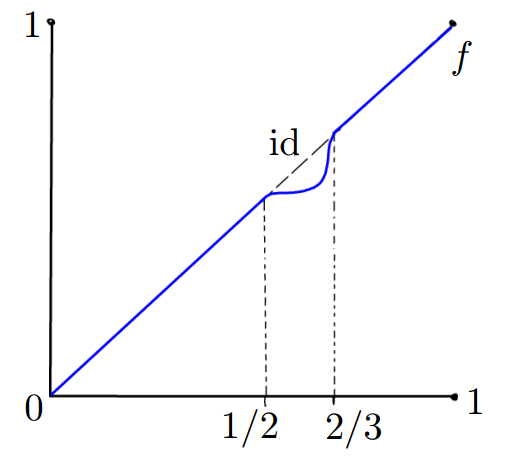}
  \caption{The function $f$.}
  \label{fig:graph}
\end{figure}
\end{center}

\subsection{Turbulisation}

Consider the surface $S = [0,1] \times \NS^1$ with coordinates $(r,t)$; $M_\leg$ projects onto $S$ in the obvious way. Under this projection the kernel $\SW_\leg$ is mapped to the  line field $L = \langle \partial_t + h(r)\partial_r \rangle$. Similarly, the foliation $\SF_\leg$ is simply the pullback of the line field $F_1 = \langle \partial_r \rangle$. $F_1$ and $L$ are transverse to one another. We can find a homotopy of line fields $(F_s)_{s\in[0,1]}$ in $S$ satisfying:
\begin{itemize}
\item $F_1 = \langle \partial_r \rangle$,
\item $F_s$ is transverse to $L$, for all $s$,
\item $F_s$ is isotopic to $F_1$ for every $s>0$,
\item $F_0$ is as in the last frame of Figure \ref{fig:foliation}: it has a closed orbit bounding a (half) Reeb component.
\end{itemize}
This path of line fields lifts to a path of codimension--$1$ foliations $\SF_{\leg,s}$ in $M_\leg$. $\SF_{\leg,1}$ is simply $\SF_\leg$ and $\SF_{\leg,s}$ is isotopic to it for every positive $s$. $\SF_{\leg,0}$ has a single compact leaf, which is diffeomorphic to $T^3$; this leaf bounds a Reeb component whose interior leaves are diffeomorphic to $\R^2 \times \NS^1$. Transversality of $L$ with respect to $F_s$ implies that $\SE_\leg$ imprints a contact foliation $\xi_{\leg,s}$ on each $\SF_{\leg,s}$.

\begin{lemma} \label{lem:tight}
The contact foliations in the homotopy $(M_\leg,\SF_{\leg,s},\xi_{\leg,s})$, $s\in[0,1]$, have all leaves tight.
\end{lemma}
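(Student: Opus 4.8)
The plan is to prove something slightly stronger and uniform in $s$: every leaf of every $\SF_{\leg,s}$, equipped with the imprinted contact structure $\xi_{\leg,s}$, is contactomorphic to an open subset of the model $(\D^2\times\NS^1,\xi_\leg)$. Since $(\D^2\times\NS^1,\xi_\leg)$ is tight — being the standard neighbourhood of a legendrian knot in a tight contact $3$--manifold, e.g.\ a legendrian unknot in $(\NS^3,\xi_\std)$ — and since open subsets of tight contact manifolds are tight, each leaf will be tight. (To pass from open subsets of $\D^2\times\NS^1$ to the actual leaves, which may be closed or otherwise topologically different, I will use that tightness of a contact manifold is inherited from any of its covering spaces: an overtwisted disc in the base lifts to one upstairs.)

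The first step is to straighten the characteristic foliation on a cyclic cover. Pass to $\widetilde{M}_\leg=\D^2\times\NS^1\times\R$, unwrapping the last circle, carrying the lifted even--contact structure $\widetilde{\SE}_\leg=\xi_\leg\oplus\langle\partial_t+X\rangle$ and the lifted foliations $\widetilde{\SF}_{\leg,s}$. The key point is that the flow $\psi_u$ of $X$ on $\D^2\times\NS^1$ consists of contactomorphisms of $(\D^2\times\NS^1,\xi_\leg)$, because $X$ is a contact vector field (and $X$ is complete, being supported in $\{r\in[1/2,2/3]\}$). Hence the diffeomorphism $\Xi(q,t)=(\psi_{-t}(q),t)$ of $\widetilde{M}_\leg$ conjugates $\partial_t+X$ to $\partial_t$, and, because each $\psi_{-t}$ preserves $\xi_\leg$, it carries $\widetilde{\SE}_\leg$ to the ``product'' even--contact structure $\xi_\leg\oplus\langle\partial_t\rangle$. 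Moreover $\Xi$ covers a diffeomorphism of $[0,1]\times\R$ (the relevant cover of $S$) taking the lift of $L$ to $\langle\partial_t\rangle$; since $\SF_{\leg,s}$ is pulled back from $F_s$ on $S$, the foliation $\Xi_*\widetilde{\SF}_{\leg,s}$ is pulled back from a line field $G_s$ on $[0,1]\times\R$ which is everywhere transverse to $\langle\partial_t\rangle$ — here I use precisely the transversality of $F_s$ with $L$.

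Now a line field on $[0,1]\times\R$ transverse to $\langle\partial_t\rangle$ has $r$ strictly monotone along each leaf, so every leaf is the graph of a smooth function $t=\sigma(r)$ over an $r$--interval whose only possible endpoint values are $0$ and $1$. Consequently each leaf of $\Xi_*\widetilde{\SF}_{\leg,s}$ is a graph $\{t=\sigma(r)\}$ over a subset $U\subset\D^2\times\NS^1$ that is always open (it is $\D^2\times\NS^1$, or $\{r<c\}$, or $\{r>c\}$, or $\{c<r<c'\}$, each of which is the preimage under the radial coordinate of an open subset of $[0,1]$). Projecting to $\D^2\times\NS^1$ is then a diffeomorphism of this leaf onto $U$ identifying the contact structure imprinted by $\xi_\leg\oplus\langle\partial_t\rangle$ with $(U,\xi_\leg)$. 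Undoing $\Xi$, each leaf of $\widetilde{\SF}_{\leg,s}$, with its imprinted contact structure, is contactomorphic to $(U,\xi_\leg)$ — hence tight — and it covers the corresponding leaf of $\SF_{\leg,s}$, which is therefore tight. This handles all $s$ at once, including the closed $T^3$--leaf and the interior $\R^2\times\NS^1$--leaves of the Reeb component at $s=0$.

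The routine parts are the two verifications in the middle step, and the only genuinely delicate point is making sure that $\Xi$ really trivialises $\widetilde{\SE}_\leg$: this rests entirely on $X$ being a \emph{contact} vector field, not merely a lift of $h(r)\partial_r$, so that its flow preserves $\xi_\leg$. The secondary point to be careful about is that each leaf maps onto an \emph{open} subset of $\D^2\times\NS^1$ rather than merely immersing as a hypersurface; this is exactly where transversality with $L$ — and not just with $\langle\partial_t\rangle$ — is essential, since it forces the surface leaves $G_s$ to be monotone graphs in $r$.
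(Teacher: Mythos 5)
Your proof is correct, and it fills in a step that the paper only asserts.  The paper's proof consists of two one--line claims: (a) the open leaves embed as open subsets of the tight model $(\D^2\times\NS^1,\xi_\leg)$, and (b) the closed $T^3$ leaf is contactomorphic to the space of oriented contact elements of $T^2$ and hence tight.  Your conjugation $\Xi(q,t)=(\psi_{-t}(q),t)$ on the $\R$--cover — which, as you rightly stress, trivialises $\widetilde{\SE}_\leg$ to $\xi_\leg\oplus\langle\partial_t\rangle$ precisely because $X$ is a \emph{contact} vector field for $\xi_\leg$ — is an honest proof of (a): each lifted leaf becomes a graph over an open $r$--level set $U\subset\D^2\times\NS^1$ and projects contactomorphically onto $(U,\xi_\leg)$, the transversality of $F_s$ with $L$ being exactly what forces the graph structure. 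Where you genuinely diverge from the paper is (b): instead of recognising the $T^3$ leaf as $\NS T^*T^2$, you note that its lift to $\widetilde M_\leg$ is $T^2\times\R$, already identified by $\Xi$ with an open annular shell $\{a<r<b\}\times\NS^1$ in the tight model, and then descend tightness along the cover since an overtwisted disc in the base lifts. Both routes work; yours is longer but uniform in $s$ and in the leaf type, and it dispenses with the contact--elements identification that the paper's two--sentence proof (and the remark following the lemma) relies upon.
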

\begin{proof}
The open leaves, as contact manifolds, are open subsets of the standard model $(\D^2 \times \NS^1, \xi_\leg)$, which is tight. The compact $T^3$ leaf is obtained by glueing the boundary components of a neighbourhood of the convex $T^2 = \partial (\D^2 \times \NS^1, \xi_\leg)$; it is contactomorphic to the space of oriented contact elements of the $2$--torus and therefore tight.
\end{proof}

Let us package this construction:
\begin{definition}
Let $(M,\SF,\xi)$ be a contact foliation. Suppose there is a region $U \subset M$ such that $(U,\SF,\xi)$ is diffeomorphic to the model $(M_\leg, \SF_\leg, \xi_\leg)$. We say that the homotopy $(M,\SF_s,\xi_s)_{s\in[0,1]}$ given by the procedure just described is the \textbf{turbulisation} of $(M,\SF,\xi)$ along $U$.
\end{definition}

\begin{remark}
There is an alternate way to describe the turbulisation process. $(M_\leg, \SF_\leg,  \xi_\leg)$ is simply the space of oriented contact elements of the foliation $(\D^2 \times \NS^1, \coprod_{t \in \NS^1}\D^2 \times \{t\})$: the foliation of the solid torus by its disc slices. Then, the turbulisation process upstairs amounts to turbulising $(\D^2 \times \NS^1, \coprod_{t \in \NS^1}\D^2 \times \{t\})$ and applying the contact elements construction. In particular, this highlights the fact that indeed the resulting leaves are tight. This construction also works for higher dimensional contact foliations.
\end{remark}

\begin{center}
\begin{figure}[h!]
\centering
  \includegraphics[scale=0.45]{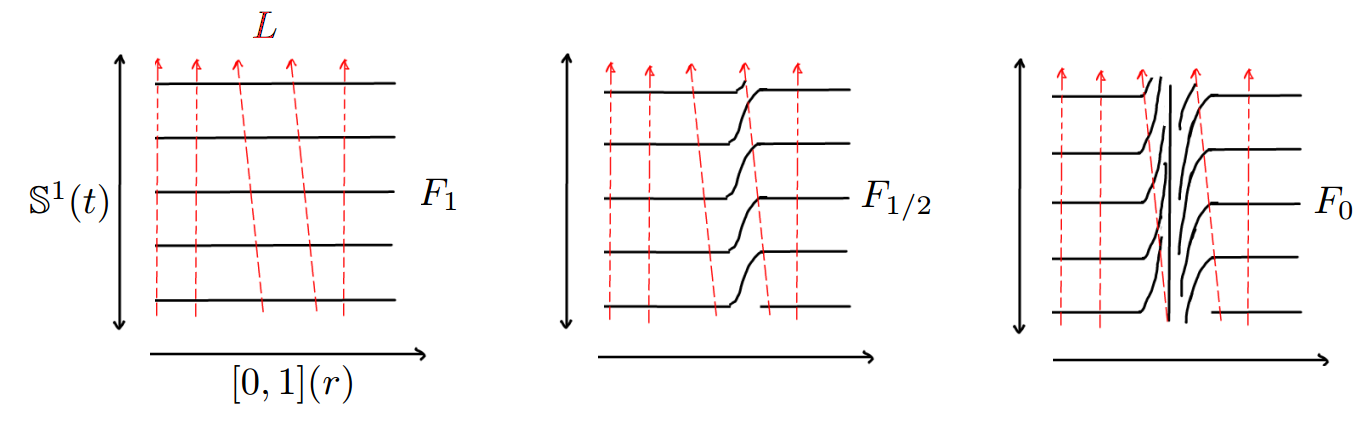}
  \caption{The solid lines represent (the foliations induced by) the path of line fields $(F_s)_{s\in[0,1]}$. The dotted ones with arrows on top represent the line field $L$.}
  \label{fig:foliation}
\end{figure}
\end{center}

\section{Applications}

\subsection{Proof of Theorems \ref{thm:main1} and \ref{thm:main2}}

In \cite{Dy} K. Dymara proved that there are legendrian links in overtwisted contact manifolds that intersect \emph{every} overtwisted disc; that is, their complement is tight. Such a link is said to be \emph{non--loose}. Let $(N,\xi)$ be an overtwisted contact manifold with $K$ a non--loose legendrian link. Consider the contact foliation
\[ (M,\SF_1,\xi_1) = (N \times \NS^1, \coprod_{t\in\NS^1} N \times \{t\}, \xi), \]
where we abuse notation and write $\xi$ for the leafwise contact structure lifting $(N,\xi)$. Take $U$ to be the tubular neighbourhood of $K \times \NS^1 \subset M$ and apply the turbulisation process to $(M,\SF_1,\xi_1)$ (on each component) to yield a path of contact foliations $(M,\SF_s,\xi_s)_{s\in[0,1]}$. It is immediate that $(M,\SF_s,\xi_s)$ is diffeomorphic to $(M,\SF_1,\xi_1)$ if $s$ is positive, because the foliations themselves are diffeomorphic and Gray's stability applies. In particular, the leaves of all of them are overtwisted. We claim that $(M,\SF_0,\xi_0)$ has all leaves tight. This is clear for the leaves in the Reeb components, as shown in Lemma \ref{lem:tight}. Similarly, the leaves outside of the Reeb components are tight because a neighbourhood of the non--loose legendrian link has been removed. We conclude by recalling that every closed overtwisted $3$--manifold admits a non--loose legendrian link: the legendrian push--off of the binding of a supporting open book \cite{EVV}. \hfill$\square$ 
 
\begin{remark}
The foliation $(M,\SF_1)$ is taut, since it admits a transverse $\NS^1$. As pointed out by V. Shende during a talk of the author: we are trading tautness of the foliation to achieve tightness of the leaves.
\end{remark}

\subsection{A more general statement}

A slightly more involved argument shows:

\begin{theorem} \label{thm:main3}
Let $M$ be a $4$--manifold. Suppose that $M$ admits a contact foliation $(\SF,\xi)$ with tight leaves. Then $M$ admits a contact foliation $(\SF_0,\xi_0)$ with tight leaves that can be approximated by contact foliations $(\SF_s,\xi_s)_{s\in(0,1]}$ with overtwisted leaves.
\end{theorem}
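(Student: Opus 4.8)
The plan is to bootstrap from the construction already used for Theorems \ref{thm:main1} and \ref{thm:main2}, where the one essential external input was the existence of a non--loose legendrian link. The obstacle here is that we are handed an arbitrary tight contact foliation $(\SF,\xi)$ on a $4$--manifold $M$, and there is no reason for its leaves to be overtwisted, nor for them to contain non--loose legendrian knots. So the first step is to \emph{create} overtwistedness locally in a controlled way while keeping the rest of the foliation tight. Concretely, I would take a small leafwise chart, i.e.\ an embedding of $(\D^2\times\NS^1,\xi_\std)$ into a single leaf $\SL$ of $\SF$, and perform a leafwise Lutz--type modification (equivalently, a full Lutz twist along a transverse knot contained in a leaf) supported in the union of such charts over $t\in\NS^1$. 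This replaces $\xi$ on a neighbourhood $V\cong \D^2\times\NS^1\times\NS^1$ by a leafwise contact structure whose leaves now contain a standard overtwisted ball, while leaving $(\SF,\xi)$ untouched outside $V$; call the result $(\SF_0,\xi_0)$. Its leaves are no longer all tight, but that is not what we need — we only need $(\SF_0,\xi_0)$ to be approximable by overtwisted contact foliations, and a posteriori one adjusts so that the \emph{starting} member of the path has tight leaves. [Here I should be careful: the statement asks for $(\SF_0,\xi_0)$ itself to have tight leaves, so the Lutz modification must be packaged inside the turbulisation so that at $s=0$ the overtwisted disc is ``hidden'' in a Reeb component — see below.]

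The key point is that the model neighbourhood $(M_\leg,\SF_\leg,\xi_\leg)$ of the turbulisation procedure only requires, as input, a loop of legendrian knots $K\colon\NS^1\times\NS^1\to M$ with each $K_t$ legendrian in a leaf; it does not require the ambient leaves to be overtwisted. So the plan is: inside the chart $V$ produced above (before doing the Lutz twist), take $K$ to be the loop of legendrian push--offs of the core $\{0\}\times\NS^1\times\NS^1$, obtain a local model $(M_\leg,\SF_\leg,\xi_\leg)\hookrightarrow(M,\SF,\xi)$, and apply the turbulisation of Section 2. This gives a path $(\SF_s,\xi_s)_{s\in[0,1]}$, diffeomorphic to $(\SF,\xi)$ for $s>0$ by Gray stability, and with $(\SF_0,\xi_0)$ having a $T^3$ Reeb leaf plus open leaves $\R^2\times\NS^1$, all of which are tight by Lemma \ref{lem:tight}. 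This already shows a tight contact foliation can be deformed \emph{from} another tight one, but the deformed ones are still tight. To inject overtwistedness I would modify the model: instead of the standard $\xi_\leg$ on $\D^2\times\NS^1$, use a contact structure obtained from $\xi_\leg$ by a Lutz twist along a transverse curve sitting in the region $r<1/2$ (the part of the disc that the diffeomorphism $\phi$ leaves untouched and that gets swept into the Reeb component at $s=0$). For $s>0$ the twisted region survives in the leaves, making them overtwisted; for $s=0$ the twisted curve lands on open leaves $\R^2\times\NS^1$ inside the Reeb component — and one arranges, by choosing the Lutz twist small and the compression $f$ strong enough, that each such open leaf still embeds contact--ly into the tight $(\R^2\times\NS^1,\xi_\std)$, or more robustly, one places the Lutz modification precisely so that at $s=0$ it degenerates. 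The cleanest route: do the Lutz twist only for $s>0$, i.e.\ let the ``size'' of the twist go to zero as $s\to 0$, so that $(\SF_0,\xi_0)$ is the untwisted turbulised foliation (tight, by Lemma \ref{lem:tight}) and each $(\SF_s,\xi_s)$, $s>0$, carries a genuine overtwisted disc in every leaf.

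The steps, in order: (1) find a leafwise chart $V\cong(\D^2\times\NS^1)\times\NS^1$ in $(M,\SF,\xi)$ around a loop of legendrian knots — this exists by the normal form recalled in Section 2.1; (2) build, as in Section 2.2, the even--contact structure $\SE_\leg$ and, as in Section 2.3, the family of foliations $\SF_{\leg,s}$ turbulising inside $V$; (3) superimpose a Lutz twist of magnitude $\epsilon(s)$ with $\epsilon(0)=0$, $\epsilon(s)>0$ for $s>0$, supported in a fixed ball inside the region $r<1/2$, so that the resulting leafwise contact structures $\xi_{\leg,s}$ vary continuously and agree with the turbulised--but--untwisted model at $s=0$; (4) check tightness at $s=0$ via Lemma \ref{lem:tight} (open leaves are subsets of $(\D^2\times\NS^1,\xi_\std)$, the $T^3$ leaf is the space of contact elements of $T^2$); (5) check overtwistedness for $s>0$: every leaf of $\SF_{\leg,s}$ that meets the fixed ball now contains a standard Lutz/overtwisted disc, and by the product structure over the $t$--circle all leaves are hit; (6) glue back: outside $V$ nothing has changed, so $(\SF_s,\xi_s)$ is defined on all of $M$, equals $(\SF,\xi)$ for — no, equals the turbulised foliation — but is diffeomorphic to a fixed foliation for $s>0$ and hence ``approximated by'' in the $C^0$ (indeed $C^\infty$) sense as $s\to 0^+$. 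The main obstacle is step (3)--(4): ensuring that the Lutz modification and the turbulisation are compatible, i.e.\ that $\SW_\leg$ stays transverse to all the $\SF_{\leg,s}$ after twisting, and that the degeneration $\epsilon(s)\to 0$ is smooth and really leaves $(\SF_0,\xi_0)$ tight rather than merely ``tight away from a measure--zero set''. Everything else is the local model of Section 2 applied verbatim, together with Eliashberg's classification to certify overtwistedness after the Lutz twist.
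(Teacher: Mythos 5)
Your proposal correctly identifies that the strategy should be to reuse the turbulisation machinery, but the crucial step (3) --- a ``Lutz twist of magnitude $\epsilon(s)$ with $\epsilon(0)=0$'' --- does not make sense, and the gap it leaves is exactly where the real content of the theorem lives. A Lutz twist is a discrete modification: there is no continuous family of contact plane fields, supported in a fixed ball and depending on a parameter $\epsilon\to 0$, that interpolates from a genuine Lutz twist to no twist while remaining contact throughout. If you shrink the twist radius to $0$ the limiting plane field degenerates; if you unwind the rotation angle the plane field leaves the contact condition at some intermediate time. So your path $(\xi_{\leg,s})$ is simply not defined near $s=0$. Relatedly, your remark that by ``choosing the Lutz twist small'' the open leaves of the Reeb component would ``still embed contactly into the tight $(\R^2\times\NS^1,\xi_\std)$'' cannot be right: any leaf that actually carries a Lutz region is overtwisted and cannot embed in a tight manifold, regardless of the twist's size.

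The paper's mechanism is fundamentally different and is what resolves this tension: the leafwise contact data is never interpolated; rather the even--contact structure is held fixed and the \emph{foliations} $\SF_s$ are what move. Concretely, one first turbulises around a legendrian loop in a Darboux chart (leaves stay tight), then performs a genuine, $t$--parametric full Lutz twist \emph{inside the new Reeb component} to produce $(\SF_1,\xi_1)$ with all leaves overtwisted, and then --- this is the key technical input missing from your proposal --- exhibits a specific legendrian knot $K'=\{r=1/4\}$ in the Lutz--twisted solid torus and proves it is \emph{non--loose}, using incompressibility of the quasi--prelagrangian tori $\{r=r_0\}$ together with Colin's gluing theorem \cite[Th\'eor\`eme 4.2]{Co}. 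Turbulising a second time around $K'\times\NS^1$ then produces the path $(\SF_s,\xi_s)$: for $s>0$ the foliation is diffeomorphic to $\SF_1$ and the leaves remain overtwisted, whereas at $s=0$ the new Reeb component ``cuts out'' a neighbourhood of the non--loose link so that what remains of each leaf is tight, exactly as in the proof of Theorem \ref{thm:main1}. Without the non--looseness argument one has no reason to expect the $s=0$ leaves outside the Reeb component to be tight, and without the intermediate step of first turbulising once, one does not even have the model $(M_\leg,\SF_\leg,\xi_\leg)$ sitting inside a tight leaf on which to perform the Lutz twist.
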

\begin{proof}
Find an embedded curve $\gamma: \NS^1 \to M$ transverse to $\SF$. This provides a $\NS^1$--family of Darboux balls $(\D^3,\xi_\std)$ along $\gamma$:
\[ (M_\std, \SF_\std, \xi_\std) = (\D^3 \times \NS^1, \coprod_{t \in\NS^1} \D^3 \times \{t\}, \xi_\std) \to (M,\SF,\xi)  \] 
Choose a legendrian knot $K \subset (\D^3,\xi_\std)$ and lift it to $K \times \NS^1 \subset (M_\std, \SF_\std, \xi_\std) \subset (M,\SF,\xi)$. Turbulisation in a neighbourhood of $K \times \NS^1$ yields a contact foliation $(M,\SF',\xi')$. The leaves of $(M,\SF',\xi')$ are still tight.

The interior of the Reeb component we just inserted is diffeomorphic, as a contact foliation, to the model $(M_\leg, \SF_\leg, \xi_\leg)$. Given a homotopically essential transverse knot $\eta \subset (\D^2\times\NS^1,\xi_\leg)$ we may perform a Lutz twist along $\eta$ to yield an overtwisted contact structure $\xi_\ot$ in $\D^2\times\NS^1$. The resulting local model along $\eta$ reads:
\[ (\D^2 \times \NS^1,\xi_\Lutz = \ker(f(r)dz + g(r)d\theta)) \]
where $(r,\theta,z)$ are the coordinates in a neighbourhood of $\eta = \{r=0\}$ and $r \to (f(r),g(r))/|f,g|$ is an immersion of $[0,1]$ onto $\NS^1$ that is injective for $r \in [0,1-\delta)$ and satisfies:
\begin{align*}
(f(r),g(r)) = (1,r^2) &\quad \textrm{ if } r \in [0,\delta] \\
f(r) = 0 &\quad \textrm{ if } r \in \{1/4,3/4\} \\
g(r) = 0 &\quad \textrm{ if } r \in \{0,1/2,1-\delta\} \\
(f(r),g(r)) = (1,(r-1+\delta)^2) &\quad \textrm{ if } r \in [1-\delta/2,1].
\end{align*}
The Lutz twist can be introduced parametrically \cite{CPP} to replace $(M_\leg, \SF_\leg, \xi_\leg) \subset (M,\SF',\xi')$ by $(M_\leg, \SF_\leg, \xi_\ot)$ in a $t$--invariant fashion. This produces a new contact foliation $(M,\SF_1,\xi_1)$ from $(M,\SF',\xi')$.

Set $K'(z) = (1/4,0,z) \in (\D^2 \times \NS^1,\xi_\Lutz) \subset (\D^2\times\NS^1,\xi_\ot)$; we shall prove that it is non--loose. The quasi--prelagrangian tori 
\[ \{r=r_0>1/4\} \subset (\D^2 \times \NS^1,\xi_\Lutz) \subset (\D^2\times\NS^1,\xi_\ot) \]
are incompressible in  $(\D^2\times\NS^1,\xi_\ot) \setminus K'$ due to our choice of $\eta$ and $K'$. We invoke \cite[Th\'eor\`eme 4.2]{Co}: $(\D^2\times\NS^1,\xi_\ot) \setminus K'$ is universally tight if and only if it is universally tight after removing any finite collection of such tori. Choose the tori at radii $r=1/2,1-\delta$. The reader can check that the pieces $\{r <1/2\}$, $\{1/2<r<1-\delta\}$ have standard tight $\R^3$ as their universal cover. The remaining piece, which intersects $(\D^2 \times \NS^1,\xi_\Lutz)$ in $\{r>1-\delta\}$, is contactomorphic to the complement of $\eta$ in $(\D^2\times\NS^1,\xi_\leg)$ and is therefore tight as well.

We turbulise in a neighbourhood of 
\[ K' \times \NS^1 \subset (M_\leg, \SF_\leg, \xi_\ot) \subset (M,\SF_1,\xi_1) \]
to produce the claimed family $(M,\SF_s,\xi_s)_{s\in[0,1]}$ and conclude the proof.
\end{proof}
The reader can check that the resulting foliation $(M,\SF_0,\xi_0)$ is in the same formal class as $(M,\SF,\xi)$, since $\SF_0$ is obtained from $\SF$ by turbulising twice and the even--contact structures inducing $\xi$ and $\xi_0$ differ from one another by a parametric (full) Lutz--twist.

A natural question to pose in light of Theorem \ref{thm:main3} is whether any $M^4$ admitting a formal contact foliation admits a foliation with tight leaves; the fundamental geometric issue towards achieving this is that it seems extremely delicate to ensure that no overtwisted disc is really present. For Theorem \ref{thm:main3} the main idea was to introduce the overtwisted discs in a controlled fashion so that they could later be destroyed.

\end{document}